\def\NN{{\mathbb N}}
\newtheorem{Theorem}{Theorem}[section]
\newtheorem{Lemma}[Theorem]{Lemma}
\newtheorem{Corollary}[Theorem]{Corollary}
\theoremstyle{definition}
\newtheorem{Remark}[Theorem]{Remark}
\newtheorem{Example}[Theorem]{Example}
\begin{document}

\title{Reverse lexicographic Gr\"obner bases and\\
strongly Koszul toric rings}

\author{Kazunori Matsuda}
\address[Kazunori Matsuda]{Department of Mathematics, College of Science, Rikkyo University, 
 Toshima-ku, Tokyo 171-8501, Japan}
\email{matsuda@rikkyo.ac.jp}
\author{Hidefumi Ohsugi}
\address[Hidefumi Ohsugi]{Mathematical Sciences,
Faculty of Science and Technology,
Kwansei Gakuin University,
Sanda, Hyogo, 669-1337, Japan}
\email{ohsugi@kwansei.ac.jp}

\begin{abstract}
Restuccia and Rinaldo proved that a standard graded $K$-algebra $K[x_1,\ldots,x_n]/I$
is strongly Koszul
if the reduced Gr\"obner basis of
$I$ with respect to any reverse lexicographic order 
is quadratic.
In this paper, we give a sufficient condition for a toric ring $K[A]$
to be strongly Koszul in terms of the reverse lexicographic 
Gr\"obner bases of its toric ideal $I_A$.
This is a partial extension of a result given by Restuccia and Rinaldo.
In addition, we show that 
any strongly Koszul toric ring generated by
squarefree monomials is compressed.
Using this fact, we show that our sufficient condition for $K[A]$
to be strongly Koszul is both necessary and sufficient
when $K[A]$ is generated by squarefree monomials.
\end{abstract}

\maketitle

\section*{Introduction}

Herzog, Hibi, and Restuccia \cite{HHR} introduced the notion of strongly Koszul algebras. 
Let $R$ be a standard graded $K$-algebra with
the graded maximal ideal $\mathfrak{m}$.
Then $R$ is said to be {\em strongly Koszul} if 
 $\mathfrak{m}$ admits a minimal system of generators 
$u_1,\ldots, u_n$ of the same degree such that 
for any 
$1 \le i_{1} < \cdots < i_{r} \le n$ and
for all $j = 1, 2, \ldots, r$, 
the colon ideal $(u_{i_{1}}, \ldots, u_{i_{j - 1}}) : u_{i_{j}}$
of $R$ is generated by a subset of elements of $\{u_{1}, \ldots, u_{n}\}$. 
As inspired by this notion, Conca, Trung, and Valla \cite{CTV}
introduced the notion of Koszul filtrations. 
A family $\mathcal{F}$ of ideals of $R$ is called a {\em Koszul filtration} if $\mathcal{F}$ satisfies 
(i) every $I \in \mathcal{F}$ is generated by linear forms;
(ii) $(0)$ and $\mathfrak{m}$ are in $\mathcal{F}$; and 
(iii) for each non-zero ideal $I \in \mathcal{F}$, there exists $J \in \mathcal{F}$ with $J \subset I$ 
such that $I / J$ is cyclic and $J : I \in \mathcal{F}$. 
For example, if $R$ is strongly Koszul, then 
$\mathcal{F} = 
\{ (0) \} \cup \{(u_{i_{1}}, \ldots, u_{i_{r}})
\ |  \  1 \le i_{1} < \cdots < i_{r} \leq n , \ 1 \leq r \leq n\}$
 is a Koszul filtration of $R$. 
The existence of a Koszul filtration of $R$ is an effective sufficient condition for $R$ to be Koszul. 
Some classes of Koszul algebras which have special Koszul filtrations 
have been studied, 
e.g., universally Koszul algebras \cite{C} and initially Koszul algebras \cite{B}. 
On the other hand, 
it is important to characterize the Koszulness in terms of the Gr\"{o}bner bases of its 
defining ideal.
It is a well-known fact that if $R$ is G-quadratic (i.e., its defining ideal has a quadratic Gr\"{o}bner basis) 
then $R$ is Koszul. 
Conca, Rossi, and Valla \cite{CRV} proved that, if $R$ is initially Koszul, then $R$ is G-quadratic. 
Moreover, they and Blum gave a necessary and sufficient condition for 
$R$ to be initially Koszul in terms of initial ideals of toric ideals (\cite{B, CRV}). 

Let $A = \{u_1, \ldots, u_n\}$ be a set 
of monomials of the same degree in a polynomial ring $K[T]=K[t_1,\ldots,t_d]$
 in $d$ variables over a field $K$.  
Then the {\em toric ring} $K[A] \subset K[T]$ is a semigroup ring generated by 
the set $A$ over $K$. 
Let $K[X]= K[x_1,\ldots,x_n]$ be a polynomial ring in $n$ variables over $K$.
The {\em toric ideal} $I_A$ of $K[A]$
is the kernel of the surjective homomorphism $\pi  : K[X] \longrightarrow K[A]$
defined by $\pi (x_i) = u_i$ for each $1 \leq i \leq n$.
Then we have $K[A]  \simeq K[X]/I_{A}$. 
A toric ring $K[A]$ is called {\em compressed} \cite{Sul} if
$\sqrt{{\rm in}_< (I_A)} = {\rm in}_< (I_A)$ 
for any reverse lexicographic order $<$.

In this paper, we study Gr\"obner bases of 
toric ideals of strongly Koszul toric rings. 
First, in Section 1,
we give a sufficient condition for $K[A]$ to be strongly Koszul in terms of 
the Gr\"obner bases of $I_A$ (Theorem \ref{main1}). 
We then have Corollary \ref{revlexquad2}, i.e., 
if the reduced Gr\"obner basis of
$I_A$ with respect to any reverse lexicographic order 
is quadratic, then $K[A]$ is strongly Koszul \cite[Theorem 2.7]{RR}. 
On the other hand,
Examples \ref{cexa1} and \ref{cexa2} are counterexamples of 
\cite[Conjecture 3.11]{RR} (i.e., counterexamples of the converse of Corollary \ref{revlexquad2}).
In Section 2, we discuss strongly Koszul toric rings generated by squarefree monomials.
We show that such toric rings are compressed (Theorem \ref{compressed}). 
Using this fact, we show that the sufficient condition for $K[A]$ to be strongly Koszul in Theorem \ref{main1} 
is both necessary and sufficient
when the toric rings are generated by squarefree monomials (Theorem \ref{converse}).

\section{Gr\"obner bases and strong Koszulness}

First, we give a sufficient condition for toric rings to be strongly Koszul
in terms of the reverse lexicographic Gr\"obner bases.
We need the following lemma:

\begin{Lemma}
\label{key}
Suppose that, for each $1 \leq i < j \leq n$,
there exists a monomial order $\prec$ such that,
with respect to $\prec$,
an arbitrary binomial $g$ in the reduced Gr\"obner basis
of $I_A$ satisfies the following conditions:
\begin{enumerate}
\item[(i)]
$x_i \ | \ {\rm in}_\prec (g)$ and 
$x_j \not| \  {\rm in}_\prec (g)
 \Longrightarrow 
$
$g = x_i x_k - x_j x_\ell$
for some $1 \leq k, \ell \leq n$,
\item[(ii)]
$x_j \ | \ {\rm in}_\prec (g)$ and 
$x_i \not| \  {\rm in}_\prec (g)
 \Longrightarrow$
$g = x_j x_\ell - x_i x_k$
for some $1 \leq k, \ell \leq n$.
\end{enumerate}
Then, $K[A]$ is strongly Koszul.
\end{Lemma}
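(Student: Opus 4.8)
The plan is to verify the definition of strongly Koszul directly, taking the monomials $u_1,\ldots,u_n$ (the images of the variables $x_i$) as the minimal generating system of $\mathfrak{m}$ in $K[A]$. The heart of the matter is to show that for any indices $1 \le i_1 < \cdots < i_r \le n$ and any $j$, the colon ideal $(u_{i_1},\ldots,u_{i_{j-1}}) : u_{i_j}$ is generated by a subset of $\{u_1,\ldots,u_n\}$. I would reduce this to a pairwise statement: the key fact about toric rings (or more generally about the combinatorial structure of these colon ideals) is that it suffices to control each single colon $(u_p) : u_q$ for $p \ne q$ and then show these combine correctly. So first I would fix a pair $i < j$ and analyze $(u_i) : u_j$, which by the hypothesis comes with its own tailored monomial order $\prec$.

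The central computation is to describe $(u_i):u_j$ in $K[A]$ using the Gr\"obner basis data. Here I would pass to the polynomial ring $K[X]$ and work modulo $I_A$: an element $u_p$ lies in the colon ideal $(u_i):u_j$ exactly when $u_p u_j \in (u_i)$ in $K[A]$, i.e.\ when $x_p x_j - x_i x_q \in I_A$ for some $q$, equivalently when the binomial $x_p x_j - x_i x_q$ reduces to zero. The conditions (i) and (ii) are engineered precisely so that whenever $x_i$ divides a leading term but $x_j$ does not (or vice versa), the relevant Gr\"obner basis element is a \emph{quadratic} binomial of the form $x_i x_k - x_j x_\ell$. I would use this to argue that the normal form computation for $u_j \cdot (\text{monomial})$ only ever introduces the generator $u_i$ through such quadratic exchanges, so that membership in the colon ideal is detected at the level of the generators $u_k$ themselves. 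Concretely, I expect to show that $(u_i):u_j$ is generated by those $u_\ell$ for which a Gr\"obner basis binomial $x_i x_k - x_j x_\ell$ exists, which is manifestly a subset of $\{u_1,\ldots,u_n\}$.

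The main obstacle, and the step requiring the most care, is the passage from the \emph{pairwise} colon ideals $(u_i):u_j$ to the \emph{general} colon ideals $(u_{i_1},\ldots,u_{i_{j-1}}) : u_{i_j}$ appearing in the definition. One has $(u_{i_1},\ldots,u_{i_{j-1}}) : u_{i_j} = \sum_{s<j} \big( (u_{i_s}) : u_{i_j} \big)$, so a priori this is a sum of ideals each generated by variables; the subtlety is that a sum of monomial ideals generated by subsets of $\{u_1,\ldots,u_n\}$ is again generated by a subset, which is immediate for monomial ideals but requires knowing these colon ideals really are generated by a subset of the $u_k$ and not merely by monomials in them. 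I would therefore want to confirm that each $(u_{i_s}):u_{i_j}$ is generated in degree one by a subset of the $u_k$ (using the quadratic exchange structure above, which keeps everything in degree one after dividing out $u_{i_j}$), and then the sum is handled formally. Assembling these, together with the fact that the order $\prec$ may be chosen to depend on the pair $(i,j)$, yields that all the required colon ideals are generated by subsets of $\{u_1,\ldots,u_n\}$, which is exactly the strongly Koszul condition.
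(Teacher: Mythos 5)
Your overall strategy is the same as the paper's: reduce strong Koszulness to a pairwise condition on the pair $(u_i,u_j)$, then exploit the tailored order $\prec$ and the quadratic-exchange shape forced by (i) and (ii) during Gr\"obner reduction. Two comments on the set-up before the main issue. First, the ``most delicate step'' you identify --- passing from pairwise data to the general colon ideals $(u_{i_1},\ldots,u_{i_{j-1}}):u_{i_j}$ --- is not something the paper re-proves: it invokes \cite[Proposition 1.4]{HHR}, which says that a semigroup ring is strongly Koszul if and only if each intersection $(u_i)\cap(u_j)$ is generated in degree $2$ (equivalently, each $(u_i):u_j$ is generated by a subset of the $u_k$). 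You would either cite this or reprove it; your sketch of the sum decomposition of the colon ideal is plausible for semigroup rings but is not where the real content lies. Second, your intended description of $(u_i):u_j$ as ``those $u_\ell$ for which a Gr\"obner basis binomial $x_ix_k-x_jx_\ell$ exists'' is imprecise: a witness $x_ix_k-x_jx_\ell\in I_A$ need not survive into the reduced Gr\"obner basis, so identifying the degree-one part of the colon ideal with Gr\"obner basis elements is not quite right (though harmless for the conclusion, since any subset of $\{u_1,\ldots,u_n\}$ would do).

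The genuine gap is at the heart of the argument: the assertion that ``the normal form computation only ever introduces $u_i$ through quadratic exchanges, so membership in the colon ideal is detected at the level of the generators'' \emph{is} the lemma, and your proposal asserts it rather than proves it. What is actually needed is an argument that the colon ideal has no minimal generator of degree $\ge 2$, i.e.\ that $(u_i)\cap(u_j)$ has no minimal generator of degree $\ge 3$. The paper does this by contradiction: a putative minimal generator $u_{k_1}\cdots u_{k_s}$ with $s\ge 3$ yields binomials $x_{k_1}\cdots x_{k_s}-x_iX^\alpha$ and $x_{k_1}\cdots x_{k_s}-x_jX^\beta$ in $I_A$, hence $x_iX^\alpha$ and $x_jX^\beta$ reduce to a common normal form $m$. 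If any $g$ in the reduced Gr\"obner basis with $x_i\mid{\rm in}_\prec(g)$ is used in reducing $x_iX^\alpha$, then either $x_j\mid{\rm in}_\prec(g)$ as well (so $u_iu_j$ divides the generator) or, by (i), $g=x_ix_k-x_jx_\ell$ (so the degree-two element $u_iu_k\in(u_i)\cap(u_j)$ divides the generator); both contradict minimality. Hence $x_i\mid m$, symmetrically $x_j\mid m$, so $u_iu_j$ divides the generator --- the final contradiction. Note that this minimality bookkeeping (each bad reduction step produces a \emph{smaller} element of $(u_i)\cap(u_j)$ dividing the given one) is the mechanism your sketch is missing; without it, ``only quadratic exchanges occur'' does not by itself rule out a degree-three minimal generator.
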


\begin{proof}
Suppose that $K[A]$ is not strongly Koszul.
By \cite[Proposition 1.4]{HHR}, there exists a monomial $u_{k_1} \cdots u_{k_s}$ of 
a minimal set of generators of $(u_i) \cap (u_j)$
such that $s \geq 3$.
Since $u_{k_1} \cdots u_{k_s}$ belongs to $(u_i) \cap (u_j)$,
there exist binomials 
$x_{k_1} \cdots x_{k_s} - x_i X^\alpha$
and
$x_{k_1} \cdots x_{k_s} - x_j X^\beta$
in $I_A$.
Let ${\mathcal G}$ be the reduced Gr\"obner basis 
of $I_A$ with respect to $\prec$.
Since $ x_i X^\alpha -  x_j X^\beta \in I_A$ is reduced to 0
with respect to ${\mathcal G}$,
it follows that both  $ x_i X^\alpha$ and $ x_j X^\beta$ are reduced to 
the same monomial $m$ with respect to ${\mathcal G}$.
Suppose that $g \in {\mathcal G}$ is used
in the computation $ x_i X^\alpha \xrightarrow{{\mathcal G}}{} m$
and that $x_i$ divides ${\rm in}_\prec (g)$.
If $x_j$ divides ${\rm in}_\prec (g)$, then it follows that
$x_{k_1} \cdots x_{k_s} - x_i x_j X^\gamma$ belongs to $I_A$.
Thus, $u_i u_j$ divides $u_{k_1} \cdots u_{k_s}$.
This contradicts that $u_{k_1} \cdots u_{k_s}$ belongs to
a minimal set of generators of $(u_i) \cap (u_j)$.
If $x_j$ does not divide ${\rm in}_\prec (g)$, then
$g = x_i x_k - x_j x_\ell$ by assumption (i).
Hence, 
$u_i u_k \in (u_i) \cap (u_j)$ divides
$u_{k_1} \cdots u_{k_s}$.
This contradicts that $u_{k_1} \cdots u_{k_s}$ belongs to
a minimal set of generators of $(u_i) \cap (u_j)$.
Therefore, 
$x_i $ never appears in the initial monomials of 
$g \in {\mathcal G}$ which are used 
in the computation $ x_i X^\alpha \xrightarrow{{\mathcal G}}{} m$.
Hence, $x_i$ divides $m$.
By the same argument, it follows that  
$x_j$ never appears in the initial monomials of 
$g \in {\mathcal G}$ which are used 
in the computation $ x_j X^\beta \xrightarrow{{\mathcal G}}{} m$,
and hence, $x_j$ divides $m$.
Thus, $x_i x_j$ divides $m$, which means that $u_i u_j$ divides
$u_{k_1} \cdots u_{k_s}$.
This contradicts that $u_{k_1} \cdots u_{k_s}$ belongs to
a minimal set of generators of $(u_i) \cap (u_j)$.
\end{proof}

Let $G(I)$ denote the (unique) minimal set of monomial generators
of a monomial ideal $I$.
Given an ordering 
$
x_{i_1} < x_{i_2}   < \cdots < x_{i_n}
$ of variables $\{x_1,\ldots,x_n\}$,
let $<_{\rm rlex}$ denote the reverse lexicographic order 
induced by the ordering $<$.

\begin{Theorem}
\label{main1}
Suppose that, for each $1 \leq i < j \leq n$,
there exists an ordering
$
x_{i_1} < x_{i_2}   < \cdots < x_{i_n}
$
with $\{i_1, i_2\} =\{i,j\}$,
such that any monomial in
$G({\rm in}_{<_{\rm rlex}} (I_A) ) \cap ( x_{i_2} )$
is quadratic.
Then, $K[A]$ is strongly Koszul.
\end{Theorem}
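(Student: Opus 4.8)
The plan is to derive Theorem \ref{main1} directly from Lemma \ref{key}: for each fixed pair $1\le i<j\le n$ I would take the order $\prec\ :=\ <_{\rm rlex}$ attached to the given ordering $x_{i_1}<\cdots<x_{i_n}$ (with $\{i_1,i_2\}=\{i,j\}$) and verify that its reduced Gr\"obner basis $\mathcal G$ of $I_A$ satisfies hypotheses (i) and (ii). Throughout I would use three standard facts: every $g\in\mathcal G$ is a binomial $g=u-v$ with $u={\rm in}_\prec(g)$ and $\gcd(u,v)=1$; the set $\{{\rm in}_\prec(g)\mid g\in\mathcal G\}$ equals $G({\rm in}_\prec(I_A))$; and, because $A$ consists of monomials of one common degree, $I_A$ is homogeneous, so $u$ and $v$ have the same degree. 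I would also record the defining rule of the reverse lexicographic order induced by $<$: for distinct monomials $u,v$ of equal degree, $u\succ v$ holds iff the $<$-smallest variable whose exponents in $u$ and $v$ differ has the larger exponent in $v$.

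The first observation I would establish is that the smallest variable $x_{i_1}$ divides the initial monomial of no $g\in\mathcal G$. Indeed, if $x_{i_1}\mid u$ then $x_{i_1}\nmid v$ by coprimality, so $x_{i_1}$ is a variable where $u,v$ differ, and it has the larger exponent in $u$; being $<$-smallest it is the smallest such variable, whence the rule forces $u\prec v$, contradicting that $u$ is the initial monomial. Since $\{i_1,i_2\}=\{i,j\}$, this immediately makes one of (i)/(ii) vacuous (the hypothesis demanding $x_{i_1}\mid{\rm in}_\prec(g)$ can never be met), and in the other it makes the side condition $x_{i_1}\nmid{\rm in}_\prec(g)$ automatic.

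It then remains to treat a $g\in\mathcal G$ with $x_{i_2}\mid{\rm in}_\prec(g)$ (hence $x_{i_1}\nmid{\rm in}_\prec(g)$ by the first observation). Such a monomial lies in $G({\rm in}_{<_{\rm rlex}}(I_A))\cap(x_{i_2})$, so by hypothesis it is quadratic; write $u={\rm in}_\prec(g)=x_{i_2}x_\ell$, and then $v$ is quadratic as well by homogeneity. The crux is to force $x_{i_1}\mid v$. Assuming not, neither $u$ nor $v$ involves $x_{i_1}$, while $x_{i_2}\mid u$ and, by coprimality, $x_{i_2}\nmid v$; thus $x_{i_2}$ is a variable where $u,v$ differ, with the larger exponent in $u$, and since the only variable below $x_{i_2}$ is the absent $x_{i_1}$, it is the $<$-smallest such variable. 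The rule again yields $u\prec v$, a contradiction. Hence $v=x_{i_1}x_k$ and $g=x_{i_2}x_\ell-x_{i_1}x_k$, which is exactly the binomial shape required by the non-vacuous one of (i)/(ii): if $i_1=i,\ i_2=j$ it reads $g=x_jx_\ell-x_ix_k$ (condition (ii)), and if $i_1=j,\ i_2=i$ it reads $g=x_ix_\ell-x_jx_k$ (condition (i)). Having verified (i) and (ii) for every pair $i<j$, Lemma \ref{key} gives that $K[A]$ is strongly Koszul.

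I expect the main obstacle to be precisely the last step, namely forcing the trailing term $v$ to be divisible by the smallest variable $x_{i_1}$; this is where the reverse lexicographic nature of $\prec$, the degree-two conclusion coming from the hypothesis on $G({\rm in}_{<_{\rm rlex}}(I_A))\cap(x_{i_2})$, and the special placement $\{i_1,i_2\}=\{i,j\}$ all have to be used together, whereas the rest is bookkeeping that funnels the situation into Lemma \ref{key}.
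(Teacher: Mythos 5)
Your proposal is correct and follows essentially the same route as the paper: it reduces to Lemma \ref{key}, notes that the $<$-smallest variable $x_{i_1}$ can divide no initial monomial of the reduced Gr\"obner basis (making one of (i)/(ii) vacuous), uses the hypothesis to conclude that an initial monomial divisible by $x_{i_2}$ is quadratic, and then uses the reverse lexicographic comparison of the two (coprime, equal-degree) terms to force $x_{i_1}$ into the trailing term. The only difference is cosmetic: the paper fixes $x_j<x_i$ without loss of generality, while you carry both cases explicitly.
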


\begin{proof}
We may assume that $x_j < x_i$.
By Lemma \ref{key},
it is enough to show that $<_{\rm rlex}$ satisfies conditions (i) and (ii) 
in Lemma \ref{key}.
Let $g$ be an arbitrary (irreducible) binomial in the reduced Gr\"obner basis
of $I_A$ with respect to $<_{\rm rlex}$.

Since $x_j$ is the smallest variable,
$x_j$ does not divide ${\rm in}_{<_{\rm rlex}} (g)$.
Hence,  $<_{\rm rlex}$ satisfies condition (ii).
Suppose that $x_i$ divides ${\rm in}_{<_{\rm rlex}} (g)$.
By the assumption for $<$, $\deg ({\rm in}_{<_{\rm rlex}} (g)) =2$. 
Hence, 
$g = x_i x_p - x_q x_r$ for some $1 \leq p,q,r \leq n$.
Since $x_q x_r  <_{\rm rlex}  x_i x_p  $,
we have $j \in \{q,r\}$, and hence, $<_{\rm rlex}$ satisfies condition (i).
\end{proof}

As a corollary, in case of toric rings,
we have a result of 
Restuccia and Rinaldo \cite[Theorem 2.7]{RR}:

\begin{Corollary}
\label{revlexquad2}
Suppose that
the reduced Gr\"obner basis of $I_A$ is 
quadratic
with respect to any reverse lexicographic order.
Then, $K[A]$ is strongly Koszul.
\end{Corollary}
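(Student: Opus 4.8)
The plan is to deduce this directly from Theorem \ref{main1} by verifying that its hypothesis is automatically satisfied under the stronger assumption made here. The one background fact I would invoke is the standard identification of the minimal monomial generators of an initial ideal with the leading terms of a reduced Gr\"obner basis: if ${\mathcal G}$ is the reduced Gr\"obner basis of $I_A$ with respect to a monomial order $\prec$, then $G({\rm in}_\prec (I_A))$ is exactly the set $\{ {\rm in}_\prec (g) \ : \ g \in {\mathcal G} \}$. This is what lets me transfer a degree bound on the Gr\"obner basis to a degree bound on the minimal generators of the initial ideal.

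First I would fix an arbitrary pair $1 \leq i < j \leq n$ and choose any ordering $x_{i_1} < x_{i_2} < \cdots < x_{i_n}$ of the variables with $\{i_1, i_2\} = \{i, j\}$; for instance, place $x_i, x_j$ in the two smallest slots in either order and order the remaining variables arbitrarily. By the hypothesis of the corollary, the reduced Gr\"obner basis ${\mathcal G}$ of $I_A$ with respect to the reverse lexicographic order $<_{\rm rlex}$ induced by this ordering is quadratic, so every $g \in {\mathcal G}$ has degree two, and consequently each initial monomial ${\rm in}_{<_{\rm rlex}} (g)$ is quadratic.

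Next, applying the identification above, I would conclude that every element of $G({\rm in}_{<_{\rm rlex}} (I_A))$ is quadratic. In particular, any monomial lying in $G({\rm in}_{<_{\rm rlex}} (I_A)) \cap ( x_{i_2} )$ is quadratic --- indeed this already holds before intersecting with $(x_{i_2})$, since \emph{all} minimal generators are quadratic. Thus the sole nontrivial hypothesis of Theorem \ref{main1} is verified for the chosen ordering, and since the pair $(i,j)$ was arbitrary, Theorem \ref{main1} applies and yields that $K[A]$ is strongly Koszul.

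I do not expect any real obstacle: the assumption ``quadratic with respect to any reverse lexicographic order'' is uniformly stronger than the per-pair, per-generator degree condition demanded by Theorem \ref{main1}, so the corollary is essentially a formal consequence. The only point meriting a moment of care is the passage from ``every element of the reduced Gr\"obner basis is quadratic'' to ``every element of $G({\rm in}_{<_{\rm rlex}} (I_A))$ is quadratic,'' which rests precisely on the fact that the minimal generators of the initial ideal are the leading terms of the reduced Gr\"obner basis.
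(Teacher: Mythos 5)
Your proposal is correct and matches the paper's (implicit) argument: the corollary is stated as an immediate consequence of Theorem \ref{main1}, obtained exactly as you describe by noting that the minimal generators of $\mathrm{in}_{<_{\rm rlex}}(I_A)$ are the leading terms of the reduced Gr\"obner basis, hence all quadratic under the hypothesis. Nothing further is needed.
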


\begin{Example}
Let $K[A_n] = K[s, t_1 s, \ldots, t_n s, t_1^{-1} s, \ldots, t_n^{-1}s]$.
Then, $I_{A_n}$ is the kernel of the surjective homomorphism
$\pi : K[X] \longrightarrow K[A_n]$ defined by
$\pi(z) =s$, $\pi(x_i) = t_i s$, and $\pi(y_i) = t_i^{-1} s$. 
It is easy to see that $K[A_n]$ is isomorphic to 
$$K[A_G^{\pm}]=
K[s, t_1t_{n+1} s, \ldots, t_n t_{n+1} s, t_1^{-1} t_{n+1}^{-1} s, \ldots, t_n^{-1} t_{n+1}^{-1}s],
$$ where 
$A_G^{\pm}$ is the {\em centrally symmetric configuration}  \cite{central} of $A_G$
associated with the star graph $G=K_{1,n}$ with $n+1$ vertices.
By \cite[Theorem 4.4]{central}, $I_{A_n}$ is generated by
${\mathcal F} =\{ x_i y _i -z^2 \ | \ i = 1,2, \ldots, n\}.$
Then, the Buchberger criterion tells us that
the set
$
{\mathcal F} \cup \{ x_i y _i - x_j y_j \ | \ 1 \leq i < j \leq  n\}
$
is a Gr\"obner basis  of $I_{A_n}$ with respect to 
any monomial order (i.e., a universal Gr\"obner basis of $I_{A_n}$).
Thus, by Corollary \ref{revlexquad2},
$K[A_n]$ is strongly Koszul for all $n \in \NN$.
Eliminating the variable $z$ from ${\mathcal F}$,
by the same argument above,
it follows that the toric ring
$K[B_n] = K[t_1 s, \ldots, t_n s, t_1^{-1} s, \ldots, t_n^{-1}s]$
is strongly Koszul for all $n \in \NN$.
Note that $K[B_n]$ is isomorphic to some toric ring generated by
squarefree monomials.
\end{Example}

\begin{Remark}
A standard graded $K$-algebra $R$ is said to be
{\em $c$-universally Koszul} \cite{EHH}
if the set of all ideals of $R$ which are generated by subsets of the variables
is a Koszul filtration of $R$.
Ene, Herzog, and Hibi proved that
a toric ring $K[A]$ is 
$c$-universally Koszul if 
the reduced Gr\"obner basis of $I_A$ is 
quadratic
with respect to any reverse lexicographic order
\cite[Corollary 1.4]{EHH}.
However, it is known that a toric ring
$K[A]$ is $c$-universally Koszul if and only if 
$K[A]$ is strongly Koszul.
See \cite[Definition 7.2]{Peeva} or
\cite[Lemma 3.18]{Murai}.
So, \cite[Corollary 1.4]{EHH} is equivalent to 
Corollary \ref{revlexquad2}.
\end{Remark}

In Section 2, we will show that 
the converse of Theorem \ref{main1}
holds when $K[A]$ is generated by squarefree monomials.
However, the converse does not hold in general.

\begin{Example}
\label{cexa1}
It is known \cite{HHR} that any Veronese subring of a polynomial
ring is strongly Koszul.
Let $K[A]$ be the fourth Veronese subring of 
$K[t_1,t_2]$,
i.e.,
$K[A] = K[t_1^4, t_1^3 t_2, t_1^2 t_2^2, t_1 t_2^3, t_2^4]$.
Then $I_A$ is generated by the binomials
$$
x_3x_5 - x_4^2,\ 
x_1x_3 - x_2^2,\ 
x_3^2 - x_2x_4,\ 
x_1x_5 -x_2x_4 ,\ 
x_2x_3 - x_1x_4,\ 
x_3x_4 - x_2x_5.
$$
Let $<$ be an ordering of variables such that
$
x_{i_1} < x_{i_2}   < x_{i_3} < x_{i_4} < x_{i_5}
$
with $\{ i_1, i_2\} = \{2,4\}$.
Since both $x_2^3 - x_1^2x_4$ and $x_4^3-x_2x_5^2$
belong to $I_A$, 
it is easy to see that either $x_2^3$ or $x_4^3$ 
belongs to $G({\rm in}_{<_{\rm rlex}} (I_A) ) \cap ( x_{i_2} )$.
Thus, $I_A$ does not satisfy the hypothesis of Theorem \ref{main1}.
\end{Example}

On the other hand,
the converse of Corollary \ref{revlexquad2}
does not hold even if $K[A]$ is generated by squarefree monomials.
Note that
Examples \ref{cexa1} and \ref{cexa2} are counterexamples of 
\cite[Conjecture 3.11]{RR}.

\begin{Example}
\label{cexa2}
Let $K[A] = K[t_4, t_1 t_4, t_2 t_4, t_3 t_4, t_1t_2 t_4, t_2t_3 t_4, t_1t_3 t_4, t_1t_2t_3t_4]$,
which is the toric ring of the stable set polytope 
of the empty graph with three vertices.
Since any empty graph is {\em trivially perfect}
(see also Example \ref{stableset}),
$K[A]$ is strongly Koszul.
See \cite{Mat} for the details.
The toric ideal $I_A$ is generated by the binomials
$$
x_1x_5-x_2x_3, \ 
x_1x_6 -x_3x_4, \ 
x_1x_7 - x_2x_4, \ 
x_5x_6 - x_3x_8,\ 
x_6x_7 - x_4x_8,\ 
x_5x_7 - x_2x_8, 
$$
$$
x_1x_8 -x_4x_5,\ 
x_2x_6 - x_4x_5,\ 
x_3x_7 - x_4x_5.
$$
Let $<$ be an ordering $x_4 < x_3 < x_2 < x_1 < x_8 < x_7 < x_6 < x_5$.
Since, with respect to $<_{\rm rlex}$,
the initial monomial (i.e., the first monomial) of any quadratic binomial above
does not divide the initial
monomial $x_2 x _3 x_8$ of 
$x_4x_5^2 - x_2 x _3 x_8 \in I_A$,
we have
$x_2 x _3 x_8 \in G({\rm in}_{<_{\rm rlex}}(I_A))$.
Thus, 
the reduced Gr\"obner basis of $I_A$
with respect to $<_{\rm rlex}$ is not quadratic.
Below, we show that Theorem \ref{converse} guarantees
that $I_A$ satisfies the hypothesis of Theorem \ref{main1}.
See also Example \ref{stableset}.
\end{Example}

\section{Strongly Koszul toric rings generated by squarefree monomials}

In this section, we consider the case when $K[A]$ is {\em squarefree},
i.e., $K[A]$ is isomorphic to a semigroup ring generated by squarefree monomials.
A toric ring $K[A]$ is called {\em compressed} \cite{Sul} if
$\sqrt{{\rm in}_< (I_A)} = {\rm in}_< (I_A)$ 
for any reverse lexicographic order $<$.
It is known that $K[A]$ is normal if it is compressed.

\begin{Theorem}
\label{compressed}
Suppose that $K[A]$ is  strongly Koszul.
Then, the following conditions are equivalent:
\begin{enumerate}
\item[(i)]
$K[A]$ is squarefree;
\item[(ii)]
$I_A$ has no quadratic binomial
of the form $x_i^2 - x_j x_k$;
\item[(iii)]
$K[A]$ is compressed.
\end{enumerate}
In particular, any squarefree strongly Koszul toric ring is compressed.
\end{Theorem}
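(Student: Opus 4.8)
The plan is to prove the cycle of implications (i) $\Rightarrow$ (ii) $\Rightarrow$ (iii) $\Rightarrow$ (i), so that the concluding assertion (the special case (i) $\Rightarrow$ (iii)) is subsumed. The implication (i) $\Rightarrow$ (ii) is immediate: realizing $K[A]$ by squarefree monomials $u_\ell = T^{c_\ell}$ with $c_\ell \in \{0,1\}^d$, any relation $x_i^2 - x_j x_k \in I_A$ gives $2c_i = c_j + c_k$; reading this coordinatewise, the left-hand side lies in $\{0,2\}^d$, which forces $c_j = c_k = c_i$. Since the generators are distinct, this is only the zero binomial, so no nonzero $x_i^2 - x_j x_k$ exists.

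For (ii) $\Rightarrow$ (iii) I would first upgrade (ii) to a structural statement. As $K[A]$ is strongly Koszul it is Koszul, hence $I_A$ is generated by quadratic binomials; discarding $x_i^2 - x_j^2$ (which forces $u_i = u_j$) and, by (ii), the binomials $x_i^2 - x_j x_k$, we conclude that $I_A$ is generated by squarefree quadrics $x_i x_j - x_k x_\ell$ with $\{i,j\}\cap\{k,\ell\}=\emptyset$. Now fix an arbitrary reverse lexicographic order $<_{\rm rlex}$ and argue by contradiction: suppose some reduced Gr\"obner basis element has a non-squarefree leading monomial, and choose one, $g = x^{\mathbf a} - x^{\mathbf b}$ with ${\rm in}_{<_{\rm rlex}}(g) = x^{\mathbf a}$ and $x_p^2 \mid x^{\mathbf a}$, of least total degree (necessarily $\ge 3$, since the degree-two part of the initial ideal is squarefree). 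The heart of the argument mirrors the proof of Lemma \ref{key}: I would track the squared variable $x_p$ through the reduction and invoke the characterization of \cite[Proposition 1.4]{HHR} — that strong Koszulness is equivalent to every ideal $(u_i)\cap(u_j)$ being generated by monomials of degree at most two. The goal is to show that a doubled variable $x_p^2$ in a leading monomial must come either from a minimal generator of some $(u_p)\cap(u_q)$ of degree $\ge 3$, contradicting strong Koszulness, or else directly from a relation $x_p^2 - x_q x_r \in I_A$, contradicting (ii). Hence ${\rm in}_{<_{\rm rlex}}(I_A)$ is squarefree, and as $<_{\rm rlex}$ was arbitrary, $K[A]$ is compressed.

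For (iii) $\Rightarrow$ (i) I would use that a compressed toric ring is normal (as recalled in the text), so that $A$ consists of exactly the degree-one lattice points of $P = {\rm conv}(A)$; I then invoke the structure theory of compressed polytopes (see \cite{Sul}), by which a compressed $P$ is, up to unimodular equivalence, the intersection of a unit cube with an affine subspace. All of its lattice points are then $0/1$ vectors, so $A$ is isomorphic to a squarefree configuration and $K[A]$ is squarefree. This closes the cycle, the three conditions are equivalent, and in particular every squarefree strongly Koszul toric ring is compressed.

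The main obstacle is the bridge inside (ii) $\Rightarrow$ (iii): converting the non-squarefree leading monomial $x_p^2 \mid {\rm in}_{<_{\rm rlex}}(g)$ into either a forbidden quadric $x_p^2 - x_q x_r$ or a degree-$\ge 3$ minimal generator of $(u_p)\cap(u_q)$. This is precisely where the global strong Koszul hypothesis must be turned into local control of Gr\"obner reductions, since the example $K[t_1^3, t_2^3, t_1^2 t_2]$ — whose defining ideal is $(x_1^2 x_2 - x_3^3)$, satisfying (ii) but being neither strongly Koszul nor compressed — shows that (ii) by itself does not suffice.
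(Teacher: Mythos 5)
Your outer structure (the cycle (i) $\Rightarrow$ (ii) $\Rightarrow$ (iii) $\Rightarrow$ (i), with (i) $\Rightarrow$ (ii) elementary and (iii) $\Rightarrow$ (i) quoted from \cite{Sul}) matches the paper's. But the core implication (ii) $\Rightarrow$ (iii) is not actually proved: you describe a plan to ``track the squared variable through the reduction'' as in Lemma \ref{key}, and you yourself flag the bridge --- converting $x_p^2 \mid {\rm in}_{<_{\rm rlex}}(g)$ into either a forbidden quadric or a high-degree minimal generator --- as the main obstacle. That bridge \emph{is} the theorem, so as written there is a genuine gap. Moreover, the tool you name for it, \cite[Proposition 1.4]{HHR} applied to pairwise intersections $(u_p)\cap(u_q)$, is not strong enough: a non-squarefree leading term of high degree does not obviously produce a minimal generator of any pairwise intersection.

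The paper's mechanism is different and needs no reduction-tracking or induction on degree. Write the offending reduced Gr\"obner basis element as $f = x_i^2 X^\alpha - x_j X^\beta$ where $x_j$ is the smallest variable occurring in $f$ (for a reverse lexicographic order this variable divides the trailing term). Then $u_i^2 U^\alpha$ lies in $(U^\alpha)\cap(u_j)$, and one applies \cite[Corollary 1.5]{HHR} --- the extension of Proposition 1.4 asserting that for a strongly Koszul ring the intersection $(u_{k_1}\cdots u_{k_r})\cap(u_j)$ is generated in degree $\le r+1$ --- to this ideal, with $U^\alpha$ a product of $\deg X^\alpha$ generators. The degree bound forces the existence of binomials $x_i^2 X^\alpha - X^\alpha x_k x_\ell$ and $x_i^2 X^\alpha - x_j X^\gamma x_\ell$ in $I_A$, whence $x_i^2 - x_k x_\ell \in I_A$; by (ii) this is the zero binomial, so $i=k=\ell$, and then $x_i X^\alpha - x_j X^\gamma \in I_A$ has initial term $x_i X^\alpha$ properly dividing ${\rm in}_{<_{\rm rlex}}(f)$, contradicting that $f$ belongs to the reduced basis. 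If you replace your reduction-tracking step by this application of Corollary 1.5, your outline closes; without it, the pairwise statement of Proposition 1.4 does not give the needed degree control on $(U^\alpha)\cap(u_j)$. (Your preliminary reduction to squarefree quadratic generators of $I_A$ is correct but is not actually used, and your counterexample $K[t_1^3,t_2^3,t_1^2t_2]$ correctly shows that (ii) alone is insufficient, which is consistent with the paper since that ring is not strongly Koszul.)
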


\begin{proof}
First, (i) $\Longrightarrow $ (ii) is trivial.
By \cite[Theorem 2.4]{Sul}, we have (iii) $\Longrightarrow $ (i).
Thus it is enough to show (ii) $\Longrightarrow $ (iii).

Let $K[A]$ be a strongly Koszul toric ring
such that $I_A$ has no quadratic binomial
of the form $x_i^2 - x_j x_k$.
Suppose that an irreducible binomial
$f = x_i^2 X^\alpha - x_j X^\beta$ belongs to the reduced Gr\"obner basis
of $I_A$ with respect to a reverse lexicographic order $<_{\rm rlex}$  and
that $x_j$ is the smallest variable in $f$.
Then, $u_i^2 U^\alpha $ belongs to $(U^\alpha) \cap (u_j)$.
Since $K[A]$ is strongly Koszul,
by \cite[Corollary 1.5]{HHR},
$(U^\alpha) \cap (u_j)$ is generated by 
the element in  $(U^\alpha) \cap (u_j)$ of degree $\leq \deg (X^\alpha) +1 $.
Hence, $u_i^2 U^\alpha $ is generated by such elements.
Thus, there exist binomials
$x_i^2 X^\alpha - X^\alpha x_k x_\ell$
and
$x_i^2 X^\alpha - x_j X^\gamma x_\ell$
in $I_A$.
Then, we have $ x_i^2 -  x_k x_\ell \in I_A$.
By assumption, we have $
x_i^2 -  x_k x_\ell =0$, and hence, $i = k = \ell$.
Thus, the binomial $g= x_i X^\alpha - x_j X^\gamma$ belongs to $I_A$.
Since $x_j$ is the smallest variable in $f$,
it follows that $x_i X^\alpha $ is the initial monomial of $g$.
This contradicts that $f$ appears in the reduced Gr\"obner basis of $I_A$
with respect to $<_{\rm rlex}$.
Hence, $K[A]$ is compressed.
\end{proof}

\begin{Example}
\label{stableset}
Let $G$ be a simple graph on the vertex set $V(G) = \{1, \ldots, d\}$
with the edge set $E(G)$.
A subset $S \subset V(G)$ is said to be {\em stable}
if $\{i,j\} \notin E(G)$ for all $i,j \in S$.
For each stable set $S$ of $G$, we define
the monomial $u_S = t_{d+1} \prod_{i \in S} t_i$
in $K[t_1,\ldots, t_{d+1}]$.
Then the toric ring $K[Q_G]$ generated by 
$\{u_S \ | \ S \mbox{ is a stable set of } G \}$
over a field $K$ is called the toric ring of
the {\em stable set polytope} of $G$.
It is known that
\begin{itemize}
\item
$K[Q_G]$ is compressed $\Longleftrightarrow $
$G$ is perfect
(\cite[Example 1.3 (c)]{OHcompressed}, \cite{Thomas}).
\item
$K[Q_G]$ is strongly Koszul $\Longleftrightarrow $
$G$ is trivially perfect (\cite[Theorem 5.1]{Mat}).
\end{itemize}
Here, a graph $G$ is said to be {\em perfect} if the size of maximal clique of $G_{W}$ 
equals to the chromatic number of $G_{W}$ for any induced subgraph $G_{W}$ of $G$, 
and a graph $G$ is said to be {\em trivially perfect} if the size of maximal stable set of $G_{W}$ 
equals to the number of maximal cliques of $G_{W}$ for any induced subgraph $G_{W}$ of $G$.
(About the standard terminologies of graph theory, see \cite{Dies}.)
Since any trivially perfect graph is perfect \cite{G}, 
these facts are consistent with Theorem \ref{compressed}.
On the other hand, with respect to some {\em lexicographic order},
the initial ideal of the toric ideal in Example \ref{cexa2}
is not generated by squarefree monomials.
\end{Example}

Using Theorem \ref{compressed}, we now show that
the converse of Theorem \ref{main1}
holds when $K[A]$ is squarefree:

\begin{Theorem}
\label{converse}
Suppose that $K[A]$ is squarefree and strongly Koszul.
Let $1 \leq i < j \leq n$,
and let $<$ be any ordering of variables satisfying
$$
x_j <  x_i  < \{ x_k \ | \  u_i u_k / u_j \in K[A]  , k \neq j \} < \mbox{ other variables}.
$$
Then, any monomial in $G({\rm in}_{<_{\rm rlex}} (I_A) ) \cap ( x_i )$
is quadratic.
\end{Theorem}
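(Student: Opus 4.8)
The plan is to argue by contradiction: assume there is a minimal generator $m \in G({\rm in}_{<_{\rm rlex}}(I_A)) \cap (x_i)$ with $\deg m \geq 3$, and exploit the strong Koszul property through the intersection ideal $(u_i) \cap (u_j)$. First I would record two structural facts. Since $K[A]$ is squarefree and strongly Koszul, Theorem \ref{compressed} shows that $K[A]$ is compressed, so ${\rm in}_{<_{\rm rlex}}(I_A)$ is a radical monomial ideal; hence $m = x_i x_{a_1} \cdots x_{a_r}$ with distinct variables and $r \geq 2$. Writing $g = m - m'$ for the corresponding element of the reduced Gr\"obner basis, so that $\gcd(m, m') = 1$, I would make the prescribed ordering the backbone of the argument: set $C = \{ x_k \mid u_i u_k / u_j \in K[A], \ k \neq j \}$ for the ``middle block,'' and note that $x_j$ together with all of $C$ sit strictly below every remaining (``other'') variable.

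Next I would pin down where the partners of $x_i$ can live. If some $x_{a_s} \in C$, then $u_i u_{a_s} = u_j u_\ell$ yields a quadratic binomial $x_i x_{a_s} - x_j x_\ell \in I_A$ whose initial monomial is $x_i x_{a_s}$ (as $x_j$ is smallest); this is a proper divisor of $m$ lying in ${\rm in}_{<_{\rm rlex}}(I_A)$, contradicting minimality. Therefore every $x_{a_s}$ is an ``other'' variable, so $x_{a_s} > x_k$ for every $x_k \in \{x_j\} \cup C$. Moreover, minimality of $m$ gives that $n := m/x_i = x_{a_1}\cdots x_{a_r}$ is standard, i.e. the $<_{\rm rlex}$-smallest monomial in its fiber.

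Then I would show $x_j \mid m'$. The defining feature of a reverse lexicographic order is that the smallest variable occurring in $g$ divides the trailing monomial $m'$; since $x_i \mid m$ and $\gcd(m,m')=1$ force $x_i \nmid m'$, that smallest variable cannot be $x_i$, and the only variable below $x_i$ is $x_j$, so $x_j \mid m'$. Consequently $\pi(m) = u_i \pi(n)$ also equals $u_j \pi(m'/x_j)$, hence lies in $(u_i) \cap (u_j)$. Because $K[A]$ is strongly Koszul, \cite[Proposition 1.4]{HHR} guarantees that $(u_i)\cap(u_j)$ is minimally generated in degree $2$; any such generator is of the form $u_i u_k$ with $u_j \mid u_i u_k$, which forces $x_k \in \{x_j\} \cup C$. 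Choosing a degree-$2$ generator $u_i u_k$ dividing $\pi(m)$ yields $u_k \mid \pi(n)$, hence a monomial $x_k \tilde z$ in the fiber of $n$ with $x_k \mid x_k \tilde z$.

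Finally I would extract the contradiction from the ordering. Since $x_k \in \{x_j\} \cup C$ while no $x_{a_s}$ lies in $\{x_j\} \cup C$, we have $x_k \nmid n$, yet $x_k$ is strictly smaller than every variable dividing $n$. Cancelling $\gcd(n, x_k\tilde z)$, the smallest variable in the symmetric difference is at most $x_k$ and so divides $x_k \tilde z$ rather than $n$; by the reverse lexicographic rule this gives $x_k \tilde z <_{\rm rlex} n$, contradicting the standardness of $n$. I expect the main obstacle to be exactly this last transition: the hypothesis supplies only the semigroup divisibility $u_k \mid \pi(n)$ in $K[A]$, not the monomial divisibility $x_k \mid n$, and the entire purpose of the prescribed ordering---placing $x_j$ and the block $C$ below all remaining variables---is to turn that semigroup divisibility into the order inequality that destroys minimality. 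Combined with Theorem \ref{main1}, this also yields the necessity statement promised in the introduction.
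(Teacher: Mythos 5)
Your proposal is correct and follows essentially the same route as the paper's: both arguments rest on compressedness (Theorem \ref{compressed}) forcing $x_i\nmid X^\alpha$, on the degree-two generation of $(u_i)\cap(u_j)$ from \cite[Proposition 1.4]{HHR} producing a quadratic binomial $x_ix_k-x_jx_\ell$, and on the reverse lexicographic comparison made possible by placing $x_j$ and the block $C$ below all remaining variables. The only difference is organizational: the paper shows the smallest variable of $X^\alpha$ must lie in $C$ and then exhibits a quadratic initial monomial dividing $x_iX^\alpha$, whereas you front-load that quadratic-divisor argument to exclude $C$ from the support of $n=X^\alpha$ and then contradict the standardness (minimality in its fiber) of $n$ directly --- the same ingredients in a permuted order.
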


\begin{proof}
Let ${\mathcal G}$ be the reduced Gr\"obner basis of $I_A$
with respect to $<_{\rm rlex}$.
Suppose that 
$
x_i X^\alpha \in 
G({\rm in}_{<_{\rm rlex}} (I_A) ) \cap ( x_i )
$ is not quadratic.
Then, there exists a binomial $g= x_i X^\alpha  - x_j X^\beta$ in ${\mathcal G}$.
Note that ${\rm in}_{<_{\rm rlex}} (g) = x_i X^\alpha $ is squarefree 
by Theorem \ref{compressed}.
Hence, $X^\alpha $ is not divisible by $x_i$.
Moreover, since ${\mathcal G}$ is reduced, $X^\alpha $ is not divisible by $x_j$.

Since $g$ belongs to $I_A$, 
it follows that $u_i U^\alpha = u_j U^\beta$ belongs to
the ideal $(u_i) \cap (u_j)$.
Then, $u_i U^\alpha$ is generated by $u_i u_k = u_j u_\ell \in (u_i) \cap (u_j)$
for some $1 \leq k, \ell \leq n$.
Thus, there exist binomials
$x_i X^\alpha - x_i x_k X^\gamma$
and
$x_i X^\alpha - x_j x_\ell X^\gamma$
in $I_A$.
Then, we have 
$ X^\alpha - x_k X^\gamma \in I_A.$
If $k \in \{i,j \}$, then 
$ X^\alpha  \in {\rm in}_{<_{\rm rlex}} (I_A) $.
This contradicts $
x_i X^\alpha \in 
G({\rm in}_{<_{\rm rlex}} (I_A) ) $.
Hence, $k \notin \{i,j\}$.
Then, $0 \neq x_i x_k - x_j x_\ell \in I_A$
and $ {\rm in}_{<_{\rm rlex}} ( x_i x_k - x_j x_\ell ) = x_i x_k$.
Since $x_i X^\alpha$ is not divisible by $x_i x_k$,
$X^\alpha$ is not divisible by $x_k$.
In particular, $0 \neq  X^\alpha - x_k X^\gamma \in I_A$
and $X^\alpha <_{\rm rlex} x_k X^\gamma$.
Since $u_i u_k / u_j  = u_\ell$, $x_k$ belongs to 
$\{ x_k \ | \  u_i u_k / u_j \in K[A]  , k \neq j \} $.
Thus, the smallest variable $x_m$ appearing in $X^\alpha$
belongs to 
$\{ x_k \ | \  u_i u_k / u_j \in K[A]  , k \neq j \} $.
Let $u_{m'} = u_i u_m / u_j$.
Then, $x_i x_m - x_j x_{m'} $ $(\neq 0)$ belongs to $I_A$.
Therefore, $x_i x_m$ belongs to $ {\rm in}_{<_{\rm rlex}} (I_A) $
and divides $x_i X^\alpha$, which is a contradiction. 
\end{proof}

By Theorem \ref{converse},
we can check whether
a squarefree toric ring $K[A] = K[u_1,\ldots,u_n]$ is strongly Koszul
by computing the reverse lexicographic Gr\"obner bases of $I_A$
at most $n(n-1)/2$ times.

\bigskip
\bigskip

\noindent
{\bf Acknowledgement.}
This research was supported by the JST (Japan Science and Technology Agency)
CREST (Core Research for Evolutional Science and Technology) research project
Harmony of Gr\"{o}bner Bases and the Modern Industrial Society, within the framework of the
JST Mathematics Program ``Alliance for Breakthrough between Mathematics and
Sciences."

\end{document}